\newtheorem{thm}{Theorem}[section]
\newtheorem{lem}[thm]{Lemma}
\newtheorem{cor}[thm]{Corollary}
\newtheorem{dfn}[thm]{Definition}
\newtheorem{eg}[thm]{Example}
\newcommand{\mc}[1]{\mathcal{#1}}
\newcommand{\norm}[2]{\left\| #1 \right\|_#2 }
\newcommand{\normm}[1]{\left\| #1 \right\| }
\newcommand{\R}{\mathbb{R}}
\newcommand{\state}{\bm{x}}
\newcommand{\inp}{\bm{u}}
\renewcommand{\ss}{{\mathbb{X}}}  
\newcommand{{\cs}}{{\mathbb{U}}}
\newcommand{{\as}}{{\mathbb{K}}}
\newcommand{\A}{{\mathcal{A}}}
\newcommand{\T}{{\mathcal{T}}}
\newcommand{\F}{{\mathcal{F}}}
\newcommand{\B}{{\mathcal{B}}}
\newcommand{\K}{{\mathscr{K}}}
\newcommand{\EL}{{\mathscr{L}}}
\newcommand{\KL}{{\mathscr{KL}}}
\title{Stability and Well-posedness  of a Nonlinear Railway Track Model}
\author{M. Sajjad Edalatzadeh and Kirsten A. Morris
\thanks{M. S. Edalatzadeh is with the Department of Applied Mathematics, University of Waterloo, Waterloo, Ontario, Canada
        {\tt\small msedalat@uwaterloo.ca}}%
\thanks{K. A. Morris is with Faculty of Mathematics, Applied Mathematics, University of Waterloo, Waterloo, Ontario, Canada
        {\tt\small kmorris@uwaterloo.ca}}%
}
\begin{document}

\maketitle
\thispagestyle{empty}
\pagestyle{empty}

\begin{abstract}
Railway tracks rest on a foundation known for exhibiting nonlinear viscoelastic behavior. Railway track deflections are modeled by a semilinear partial differential equation. This paper studies the stability of solutions to this equation in presence of an input. With the aid of a suitable Lyapunov function, existence and exponential stability of classical solutions is established for certain inputs. The Lyapunov function is further used to find an a-priori estimate of the solutions, and also to study the input-to-state stability (ISS) of mild solutions.
\end{abstract}
\begin{IEEEkeywords}
Distributed parameter systems, flexible structures, partial differential equations, nonlinear systems, input-to-state stability.
\end{IEEEkeywords}


\section{INTRODUCTION}
\IEEEPARstart{S}{tability} analysis of nonlinear partial differential equations (PDE's) modeling flexible structures has attracted attention in the past few decades.
To name but a few of the publications in this field; in \cite{fu2001boundary},  boundary stabilization of a nonlinear beam clamped at one end and supported by a nonlinear bearing at the other end is studied.
In \cite{favsangova2001asymptotic}, authors investigated asymptotic behavior of a semilinear viscoelastic beam model including a memory term. The nonlinearity is assumed to satisfy some growth assumption.
In \cite{takeda2012initial}, asymptotic stability of Falk model of shape memory alloys is studied using energy method. 
In \cite{edalatzadeh2014suppression,edalatzadeh2016boundary}, boundary stabilization of a nonlinear micro-beam model is studied using linearizion technique together with Lyapunov method.
The von Karman model of slender beams has also been investigated in many papers. In a recent study, Liu et al. considered asymptotic stability of von Karman beam with thermo-viscoelastic damping \cite{liu2018asymptotic}.
Nonlinear PDE's with a fourth order spatial derivative are not limited to flexible structures; an example is Cahn-Hilliard equation with inertial term which describes the phase separation of binary fluids; see for example, \cite{grasselli20092d}. 

The nonlinear railway track model in this paper was described in \cite{ansari2011}. 
The nonlinearity in this model is caused by the railway  support  ballast which is known for highly nonlinear viscoelastic behavior \cite{dahlberg2002dynamic}. Another nonlinear model for railway tracks has been suggested \cite{ding2013dynamic}. Unlike the previous model, this model also includes shear deformations in flexible track beams. This railway track model  was used to study the effect of passing vehicles on pavements \cite{ding2012convergence}. There are also studies devoted to the vibration monitoring and control of railway tracks  \cite{kouroussis2015review,zhu2015low}. Track deflections induced by train passage are a cause of ride discomfort, fatigue in railway, and disturbances to nearby buildings \cite{kouroussis2015review}. Thus, the vibrations need to be carefully monitored and controlled.

This paper focuses on well-posedness and stability, with respect to both initial conditions, and inputs of this model.
Input-to-state stability  (ISS) does not generalize in a straightforward way to infinite dimensions; see \cite{mironchenko2016local} for counter examples.  Input-to-state stability theory has been extended in recent years to include systems of infinite dimension.  In \cite{mironchenko2017}, a comparison between ISS theory of finite-dimensional systems and that of infinite-dimensional systems is presented.  
In \cite{Jayawardhana2008}, ISS of a class of linear infinite dimensional control systems with nonlinear feedbacks is discussed. The nonlinear feedback satisfies a sector condition that does not apply to the nonlinearity in this paper. In \cite{mazenc2011strict}, strict Lyapunov functions were used to investigate the stability of nonlinear heat equation; also, such Lyapunov functions were used to establish a robust stability of the equation in presence of a convection term and uncertainties. Integral input-to-state stability  is a weaker property than ISS. In \cite{mironchenko2016integral}, integral ISS  is discussed. In \cite{jacob2018infinite}, the relation between iISS and ISS is discussed for linear systems with an unbounded control operator. ISS with respect to boundary inputs and disturbances has also been studied in \cite{karafyllis2016iss,orlov2017general,pisano2017iss}. 
In \cite{pisano2017iss}, ISS of the reaction-diffusion-advection equation with boundary and in-domain point-wise sensing and actuation is considered. 

In this paper, a Lyapunov function is used to establish ISS for the nonlinear controlled railway track model. To construct the Lyapunov function, the multiplier method \cite{komornik1994exact} is used. 
Furthermore, a density argument is used to prove the ISS of the model when the inputs are not differentiable. In such cases, the Lyapunov function is also non-differentiable. A density argument was also used in \cite[Lemma~2.2.3]{mironchenThesis}, where the control system is assumed to have a transition map that continuously depends on both initial conditions and inputs.

The paper is organized as follows: Section 2 is a short section containing notation and definitions. Section 3 introduces the railway track model and discusses well-posedness of this model. Section 4 presents   existence and exponential stability of classical solutions to the PDE  while section 5 is devoted to the stability of mild solutions. 

\section{Notation and definitions}
Let $\ss$ and $\cs$ be Banach spaces, and $I\subset \R$ be a possibly unbounded interval. The function space $C^m(I;\ss)$ consists of all $m$ times continuously differentiable $\ss$-valued functions, $L^2_{loc}(I;\cs)$ is the space of all strongly measurable functions $\inp:I\to \cs$, $t\mapsto \inp(t)$, for which $\norm{\inp (t)}{\cs}$ is in $L^2_{loc}(I,\R)$. Denoted by $PC(\R^+;\cs)$ is the space of all bounded, piecewise continuous $\cs$-valued function over $\R^+$. 

The comparison function sets $\K$, $\K_\infty$, $\EL$, and $\KL$ are defined as
\begin{flalign}
\allowdisplaybreaks \K\coloneqq &\{\gamma:\R^+ \to\R^+ |\, \gamma \text{ is continuous,}\notag\\ 
\allowdisplaybreaks&\text{ strictly increasing, and } \gamma (0) = 0\}, \\
\K_\infty\coloneqq& \{\gamma\in \K|\, \gamma \text{ is unbounded}\},\\
\EL \coloneqq &\{\gamma:\R^+ \to\R^+ |\, \gamma \text{ is continuous,}\notag\\ 
&\text{ strictly decreasing, and } \lim\limits_{t\to \infty}\gamma (t) = 0\}, \\
\KL \coloneqq &\{\beta: \R^+ \times \R^+ \to\R^+ |\, \beta \text{ is continuous,}\notag \\
&\;\beta(\cdot, t)\in \K, \beta(r, \cdot)\in \EL,\, \forall  t\ge 0,\; \forall r>0\}.
\end{flalign}

Let $\state$ indicate the state, and $\inp$  the input. For  linear operators $\A:D(\A)\subset \ss \to \ss$,   $\B:\cs\to\ss$,  and possibly nonlinear operator $\F(\cdot):\ss\to \ss$, consider the initial value problem
\begin{equation}
\left\{\begin{array}{ll}
\dot{\state}(t)=\mc{A}\state(t)+\mc{F}(\state(t))+\mc{B}\inp (t), \; t>0,\\ \state(0)=\state_0\in \ss \label{eq: IVP} \tag{IVP}.
\end{array}\right.
\end{equation} 

\begin{dfn}\cite[Definition~4.2.1]{pazy} (Classical Solution) \label{def:classical}
A function $\state:[0,T)\to \ss$ is a classical solution to (\ref{eq: IVP}) on $[0,T)$ if $\state$ is continuous on $[0,T)$, continuously differentiable on $(0,T)$, $\state(t)\in D(\A)$ for $0<t<T$, and (\ref{eq: IVP}) is satisfied.
\end{dfn}

\begin{dfn}\label{defn-mild} (Mild Solution)
Let $\A$ be the infinitesimal generator of a strongly continuous semigroup $\T(t)$. If $\state\in C([0,T];\ss)$ satisfies
\begin{flalign}\
\state(t)=&\mc{T}(t)\state_0\label{eq:mild solution}\\
&+\int_0^t \mc{T}(t-s)\mc{F}(\state(s))ds+\int_0^t \mc{T}(t-s)\mc{B}\inp (s)ds, \notag
\end{flalign} 
for every $\state_0\in \ss$,
it is said to  be a {\em mild solution} to (\ref{eq: IVP}).
\end{dfn}

In the following definitions, it is assumed that a unique mild solution to (\ref{eq: IVP}) exists for any $\inp \in PC(\R^+;\cs)$.
\begin{dfn}
\cite[Definition~9]{mironchenko2017} (Input-to-state Stability)\label{def:ISS stability}
The (\ref{eq: IVP}) is called input-to-state stable (ISS) if there exist $\beta\in \KL$ and $\gamma\in \K$ such that for all $\state_0\in \ss$, $\inp \in PC(\R^+;\cs)$, and $t>0$ the mild solution (\ref{eq:mild solution}) satisfies
\begin{equation}\label{ISS inequality}
\normm{\state(t)}\le \beta(\normm{\state_0},t)+\gamma\left(\sup_{t \geq 0}\norm{\inp(t)}{\cs}\right).
\end{equation}
\end{dfn}

The Dini derivative of a function $V:D(\subset \ss)\to\R^+$ along trajectories of (\ref{eq: IVP}) is 
\begin{equation}
\dot{V}_{\inp}(\state_0)\coloneqq \limsup_{t\to0^+}\frac{1}{t} (V(\state(t))-V(\state_0)).
\end{equation}
\begin{dfn} \cite[Definition~11]{mironchenko2017} (ISS Lyapunov Function) \label{def:ISS Lyapunov}
A continuous function $V:D(\subset \ss)\to\R^+$ is called an ISS Lyapunov function on $D$, if there exist $\psi_1, \, \psi_2\in \K_\infty$, $\alpha\in \K_\infty$, and $\sigma \in \K$ such that for all $\state_0\in\ss$, $\inp(t)\in PC(\R^+;\cs)$,
\begin{equation}
\psi_1(\normm{\state})\le V(\state)\le \psi_2 (\normm{\state}), \; \forall \state\in\ss,
\end{equation}
and 
\begin{equation}
\dot{V}_{\inp}(\state_0)\le -\alpha(\normm{\state_0})+\sigma\left(\sup_{t \geq 0}\norm{\inp(t)}{\cs}\right).
\end{equation}
\end{dfn}
\section{WELL-POSEDNESS of  RAILWAY TRACK MODEL}
Railway tracks rest on ballast which is known to exhibit nonlinear viscoelastic behavior. Considering the Kelvin-Voigt damping in the track beam, the semi-linear partial differential equation governing the motion of the track $w(\xi,t)$ on $\xi\in [0,\ell]$ is \cite{ansari2011}
\begin{equation}\label{eq:rail-IVP}
\begin{cases}
\rho a \frac{\partial^2 w}{\partial t^2}+\frac{\partial}{\partial \xi^2}(EI\frac{\partial^2 w}{\partial \xi^2}+C_d\frac{\partial ^3 w}{\partial \xi^2 \partial t})+\mu \frac{\partial w}{\partial t}+kw\\
\allowdisplaybreaks\quad +\alpha w^3=u(\xi,t),\\
\allowdisplaybreaks w(\xi,0)=w_0(\xi), \quad \frac{\partial w}{\partial t}(\xi,0)=v_0(\xi),\\
\allowdisplaybreaks w(0,t)=w(\ell,t)=0,\\
\allowdisplaybreaks EI\frac{\partial ^2w}{\partial \xi^2}(0,t)+C_d\frac{\partial ^3w}{\partial \xi^2\partial t}(0,t)=0,\\[1mm]
EI \frac{\partial^2 w}{\partial \xi^2}(\ell,t)+C_d\frac{\partial ^3w}{\partial \xi^2\partial t}(\ell,t)=0, 
\end{cases}
\end{equation}
where the positive constants $E$, $I$, $\rho$, $a$, and $\ell$ are the modulus of elasticity, second moment of inertia, density of the beam, cross-sectional area, and length of the beam, respectively. The linear and nonlinear parts of the foundation elasticity correspond to the positive coefficients $k$ and $\alpha$, respectively. The constant $\mu>0$ is the damping coefficient of the foundation, and $C_d\ge 0$ is the coefficient of Kelvin-Voigt damping in the beam.   
The external force exerted on the railway track by moving trains, active dampers, or other external force,  is denoted by $u(\xi,t).$
The model considered here differs from that in  \cite{ansari2011} by the inclusion of   Kelvin-Voigt damping in the beam  if $C_d$ has a non-zero value, although $C_d>0$ is not assumed in the analysis.

Let $v=\partial w/\partial t$. Define the state space $\ss=H^2(0,\ell )\cap H_0^1(0,\ell )\times L^2(0,\ell )$  with  norm
\begin{equation}
\| (w,v) \|^2=\int_0^{\ell} EIw_{\xi \xi}^2+kw^2+\rho a v^2 \, d\xi \label{eq: norm},
\end{equation}
where the subscript $\cdot_\xi$ denotes the derivative with respect to $\xi$. Define the closed self-adjoint positive operator 
\begin{flalign}
&\mc{A}_0w\coloneqq w_{\xi \xi \xi \xi},\notag \\
&D(\mc{A}_0)\coloneqq\left\lbrace w\in H^4(0,\ell )| \, w(0)=w(\ell)=0,\right.\notag\\
&\qquad \qquad \left. w_{\xi \xi}(0)=w_{\xi \xi}(\ell)=0 \right\rbrace,
\end{flalign}
and also define
\begin{equation}
\mc{A}_{\scriptscriptstyle KV}(w,v)\coloneqq\left(v,-\frac{1}{\rho a}\mc{A}_0(EIw+C_dv)\right),
\end{equation}
with 
\begin{flalign}
D(\mc{A}_{\scriptscriptstyle KV})\coloneqq&\left\lbrace(w,v)\in \ss| \, v\in H^2(0,\ell )\cap H_0^1(0,\ell ), \right.\notag \\
& \left. EIw+C_dv\in D(\mc{A}_0) \right\rbrace.
\end{flalign}
Also, let $\inp \in \cs\coloneqq L^2(0,\ell)$, and define the linear operators $\mc{K}$, $\A$, $\mc{B}$, and the nonlinear operator $\mc{F}(\cdot)$ as
\begin{flalign}
&\allowdisplaybreaks\mc{K}(w,v)\coloneqq(0,-\frac{1}{\rho a}(\mu v + kw)),\\
&\allowdisplaybreaks\mc A  \coloneqq\mc{A}_{\scriptscriptstyle KV}+\mc{K}, \text{ with } D (\mc A  ) = D(\mc{A}_{\scriptscriptstyle KV} ),\\
&\allowdisplaybreaks\mc{B}\inp\coloneqq(0,\frac{1}{\rho a}\inp),\\
&\allowdisplaybreaks\mc{F}(w,v)\coloneqq(0,-\frac{\alpha}{\rho a} w^3).\label{eq:cubic nonlinearity}
\end{flalign}
With these definitions and by setting the state $\state(t)=(w(\cdot,t),v(\cdot,t))$, initial condition $\state_0=(w_0(\cdot),v_0(\cdot))$, and the input $\inp (t)=u(\cdot,t)$, the state space representation of the railway track IVP is
\begin{equation}
\left\{\begin{array}{ll}
\dot{\state}(t)=\mc{A}\state(t)+\mc{F}(\state(t))+\mc{B}\inp (t), \; t>0,\\ \state(0)=\state_0\in \ss\notag.
\end{array}\right.
\end{equation}
Notice that the nonlinear term $w^3$ is in $L^2(0,\ell)$ since $H^2(0,\ell) \subset C([0,\ell])$. Thus, the nonlinear operator $\mc{F}(\cdot)$ is well-defined on $\ss$. It is also locally Lipschitz continuous; see \cite[Lem. 6.1]{edalatzadehSICON} where it is shown to be continuously Fr\'echet differentiable on $\ss$. Also, the bounded operator $\mc{B}$ maps an input $\inp \in L^2(0,\ell)$ into the state space $\ss$. This  input space is used in many applications.

It is well known that $\mc A_{\scriptscriptstyle KV}$ generates a strongly continuous contraction semigroup on $\ss$; see \cite{chen1989proof}.
The operator $\mc{K}$ is a bounded linear operator on $\ss$ and so
the operator $\mc{A}$, with the same domain as $\mc{A}_{\scriptscriptstyle KV}$,
generates a strongly continuous semigroup on  $ \ss $ \cite[Cor. 3.2.2]{pazy}. The assumption that $\mu>0$ implies that $\mc A$ generates an exponentially stable semigroup.

\begin{thm}\cite[Theorem~3.1]{edalatzadehSICON}\label{thm:existence mild}
Let $\A$ be the infinitesimal generator of a strongly continuous semigroup. If the nonlinear operator $\F(\cdot)$ is locally Lipschitz continuous on $\ss$, then for every $\state_0\in \ss$ and positive number $R$, there exist $T>0$ such that (\ref{eq: IVP}) admits a unique mild solution $\state\in C([0,T];\ss)$ for all $\inp \in L^p(0,T;\cs)$, $\norm{\inp}{{L^p(0,T;\cs)}}\le R$, $1<p<\infty$.
\end{thm}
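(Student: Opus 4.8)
\emph{Proof sketch.}
The plan is to recast (\ref{eq: IVP}) as a fixed-point equation for the Duhamel map on a closed ball of $C([0,T];\ss)$ and apply the Banach contraction principle, choosing $T>0$ small in a way that depends on $\state_0$ and $R$ only, \emph{not} on the particular input. Fix $r>0$. By local Lipschitz continuity of $\F(\cdot)$, let $L=L(\state_0,r)$ be a Lipschitz constant for $\F$ on the closed ball $\bar B(\state_0,r)\subset\ss$ and put $N\coloneqq\normm{\F(\state_0)}+Lr$, an upper bound for $\normm{\F(\state)}$ there. Since $\T(t)$ is a $C_0$-semigroup, $\normm{\T(t)}\le M$ for $t\in[0,1]$ for some $M\ge1$; from now on restrict to $T\le1$. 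Define
\[
(\Phi\state)(t)\coloneqq\T(t)\state_0+\int_0^t\T(t-s)\F(\state(s))\,ds+\int_0^t\T(t-s)\B\inp(s)\,ds,
\]
and let $W\coloneqq\{\state\in C([0,T];\ss):\ \sup_{t\in[0,T]}\normm{\state(t)-\state_0}\le r\}$, a complete metric space under the sup-metric.

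The only non-immediate point of the setup is the input term. Since $p>1$, H\"older's inequality gives $\int_0^t\norm{\inp(s)}{\cs}\,ds\le t^{1-1/p}\norm{\inp}{{L^p(0,T;\cs)}}\le t^{1-1/p}R$, so with $\B$ bounded the input term is bounded by $M\normm{\B}\,T^{1-1/p}R$ and, by the same estimate on $[0,t]$, tends to $0$ as $t\to0^+$; moreover $t\mapsto\int_0^t\T(t-s)\B\inp(s)\,ds$ is continuous on $[0,T]$ because the convolution of a strongly continuous semigroup with an $L^1$-in-time function is continuous (see \cite{pazy}). Likewise $t\mapsto\T(t)\state_0$ is continuous with $\normm{\T(t)\state_0-\state_0}\to0$ as $t\to0^+$, and $t\mapsto\int_0^t\T(t-s)\F(\state(s))\,ds$ is continuous since $s\mapsto\F(\state(s))$ is continuous, hence bounded by $N$, on $[0,T]$. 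Thus $\Phi$ maps $C([0,T];\ss)$ into itself.

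Next I would check the two hypotheses of the contraction principle. \emph{Invariance:} for $\state\in W$,
\[
\normm{(\Phi\state)(t)-\state_0}\le\sup_{\tau\le T}\normm{\T(\tau)\state_0-\state_0}+MNT+M\normm{\B}\,T^{1-1/p}R,
\]
and each of the three terms is $\le r/3$ once $T$ is small enough — the first by strong continuity, the second and third because $T\mapsto MNT$ and $T\mapsto M\normm{\B}T^{1-1/p}R$ vanish at $T=0$; note the last bound depends on the input only through $R$. \emph{Contraction:} for $\state_1,\state_2\in W$ the semigroup and input terms cancel, leaving
\[
\normm{(\Phi\state_1)(t)-(\Phi\state_2)(t)}\le MLT\sup_{s\in[0,T]}\normm{\state_1(s)-\state_2(s)},
\]
so $\Phi$ is a contraction on $W$ as soon as $MLT<1$. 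Choosing $T=T(\state_0,R)>0$ small enough that all these conditions hold simultaneously, $\Phi$ has a unique fixed point in $W$, which is by definition a mild solution of (\ref{eq: IVP}) in $C([0,T];\ss)$. For uniqueness among \emph{all} mild solutions in $C([0,T];\ss)$ (not only those in $W$), one uses a Gronwall argument: two such solutions are continuous, hence take values in a common ball on which $\F$ has some Lipschitz constant $\tilde L$; subtracting the Duhamel formulas and using $\normm{\T(t-s)}\le M$ gives $\normm{\state_1(t)-\state_2(t)}\le M\tilde L\int_0^t\normm{\state_1(s)-\state_2(s)}\,ds$, whence $\state_1\equiv\state_2$.

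The main obstacle is purely technical and lies entirely in the input term: one must verify that the Duhamel convolution against $\B\inp$ is a genuine element of $C([0,T];\ss)$ although $\inp$ is only $L^p$, and — more importantly for the statement — that the smallness of $T$ needed for invariance and contraction can be chosen \emph{uniformly} over the ball $\norm{\inp}{{L^p(0,T;\cs)}}\le R$. Both follow from the single observation that the $L^1$-in-time norm of $\inp$ is dominated by $T^{1-1/p}R$, which is finite because $p>1$ and vanishes as $T\to0$. \QED
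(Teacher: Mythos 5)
Your contraction-mapping argument is correct and is essentially the standard proof behind the quoted result: the paper itself does not prove Theorem \ref{thm:existence mild} but cites \cite{edalatzadehSICON}, whose argument is the same Banach fixed-point scheme on a ball of $C([0,T];\ss)$, with the H\"older estimate $\int_0^t\normm{\B\inp(s)}\,ds\le M\normm{\B}\,t^{1-1/p}R$ giving exactly the uniformity of $T$ over $\norm{\inp}{{L^p(0,T;\cs)}}\le R$ that you emphasize. The only wording to tighten is in the setup and the uniqueness step: local Lipschitz continuity in the pointwise sense does not by itself give a Lipschitz constant on a ball of \emph{prescribed} radius $r$ (take $r$ small, or use the Pazy-style ``Lipschitz on bounded sets'' convention, which the cubic nonlinearity here satisfies), and for two arbitrary continuous mild solutions the Gronwall step should invoke the Lipschitz constant on the compact union of their ranges rather than on ``a common ball.''
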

Thus, by Theorem \ref{thm:existence mild}, a unique local (in time) mild solution to railway IVP is ensured.  

If the input admits further regularity, a mild solution is also a classical solution.

\begin{thm}\cite[Theorem~6.1.5]{pazy}\label{thm:existence classical}
Let $\A$ be the infinitesimal generator of a strongly continuous semigroup $\T(t)$ on $\ss$. If $\inp \in C^1([0,T];\cs)$ and the nonlinear operator $\F(\cdot)$ is continuously Fr\'echet differentiable on $\ss$, then the mild solution of (\ref{eq: IVP}) with $\state_0\in D(\A)$ is a classical solution.
\end{thm}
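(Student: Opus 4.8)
This is \cite[Theorem~6.1.5]{pazy} specialized to the operators defining (\ref{eq:rail-IVP}); I recall the structure of the argument in the present notation.

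By Theorem~\ref{thm:existence mild} there is a unique mild solution $\state\in C([0,T];\ss)$; set $g(t)\coloneqq\F(\state(t))+\B\inp(t)$, so that $\state(t)=\T(t)\state_0+\int_0^t\T(t-s)g(s)\,ds$. The plan is to first show $\state\in C^1$. Granting that, $s\mapsto D\F(\state(s))$ is norm-continuous in $\mathcal L(\ss)$ --- since $\F$ is continuously Fr\'echet differentiable and $\state$ is continuous --- hence $g=\F(\state(\cdot))+\B\inp(\cdot)$ is continuously differentiable because $\inp\in C^1$. The classical theory of the linear inhomogeneous Cauchy problem with continuously differentiable forcing and initial datum in $D(\A)$ (\cite[Ch.~4]{pazy}) then yields $\state(t)\in D(\A)$ for $0<t<T$, continuous differentiability of $\state$ on $(0,T)$, and pointwise validity of (\ref{eq: IVP}); i.e.\ $\state$ is a classical solution in the sense of Definition~\ref{def:classical}.

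For the remaining step I would introduce $\bm{w}$ as the solution of the linear Volterra equation
\begin{align*}
\bm{w}(t)={}&\T(t)\big(\A\state_0+\F(\state_0)+\B\inp(0)\big)\\
&+\int_0^t\T(t-s)\big(D\F(\state(s))\,\bm{w}(s)+\B\inp'(s)\big)\,ds,
\end{align*}
which has a unique solution in $C([0,T];\ss)$ by a contraction argument on $C([0,\tau];\ss)$ for small $\tau$ followed by continuation, since $s\mapsto D\F(\state(s))$ is continuous --- hence bounded --- in $\mathcal L(\ss)$ and $s\mapsto\B\inp'(s)$ is continuous. Then, for $h>0$, put $\state_h(t)\coloneqq h^{-1}(\state(t+h)-\state(t))$ and combine: the semigroup identity $h^{-1}(\T(t+h)-\T(t))\state_0\to\T(t)\A\state_0$, valid because $\state_0\in D(\A)$; the change of variable $s\mapsto s-h$ in the Duhamel integral, which produces a boundary term $h^{-1}\int_{-h}^{0}\T(t-s)g(s+h)\,ds\to\T(t)g(0)=\T(t)(\F(\state_0)+\B\inp(0))$; and the first-order Taylor expansions of $\F$ and of $\inp$ along the trajectory. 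Together these show that $\state_h$ satisfies the same integral equation as $\bm{w}$ up to terms vanishing uniformly on compact subintervals of $(0,T)$ as $h\to0^+$. Two uses of Gronwall's inequality --- the first to bound $\state_h$ uniformly in $h$, the second to compare $\state_h$ with $\bm{w}$ --- give $\normm{\state_h(t)-\bm{w}(t)}\to0$ uniformly on such subintervals; thus $\state$ has continuous right-derivative $\bm{w}$ and so is continuously differentiable.

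The hard part is this differentiability step, and within it the uniform-in-$t$ control of the Taylor remainder of $\F$, which relies on $\F$ being $C^1$ and on the image $\state([0,T])$ being compact; the well-posedness of the Volterra equation for $\bm{w}$ and the concluding linear inhomogeneous step are routine. For the railway track model the hypotheses genuinely hold: $\mc A$ generates a strongly continuous semigroup (as established before the theorem), $\inp(t)=u(\cdot,t)\in C^1([0,T];L^2(0,\ell))$ by assumption, and the cubic operator $\F(w,v)=(0,-\tfrac{\alpha}{\rho a}w^3)$ is continuously Fr\'echet differentiable on $\ss$ by \cite[Lem.~6.1]{edalatzadehSICON}, using the embedding $H^2(0,\ell)\hookrightarrow C([0,\ell])$.
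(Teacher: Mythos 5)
Your reconstruction is correct and follows exactly the argument the paper relies on: the statement is quoted from \cite[Theorem~6.1.5]{pazy}, whose proof proceeds precisely as you describe (Volterra equation for the candidate derivative, Gronwall estimates on the difference quotients, then the $C^1$-forcing result for the linear inhomogeneous problem with $\state_0\in D(\A)$). Your verification that the railway model satisfies the hypotheses also matches the paper's discussion preceding the theorem, so there is nothing to correct.
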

 
\section{STABILITY of CLASSICAL SOLUTIONS}
The existing literature does not predict the existence of a global solution to the railway track PDE model. To investigate the existence and stability of a global solution, a Lyapunov method \cite{queirozbook} together with the multiplier method \cite{komornik1994exact} is used. Let $c$ be a constant to be determined. Define, for  any $\state\in \ss$, 
\begin{flalign}
V(\state)&=\label{eq:lyapunov function}\\
&\int_0^{\ell} EI (w_{\xi \xi})^2+kw^2+\frac{\alpha}{2} w^4+\rho a v^2+2c w v \, d\xi.\notag 
\end{flalign}
\begin{lem}\label{lem:bound}
Let $c$ satisfy 
\begin{equation}\label{eq:condition}
0<c<\sqrt{\rho k a}.
\end{equation} 
Then, there exist positive numbers $c_l$, $c_u$, and $c_e$ such that for every $\state\in \ss$, 
\begin{equation}
c_l\normm{\state}^2\le V(\state)\le c_u\normm{\state}^2+c_h\normm{\state}^4.
\label{eq:Vbound}
\end{equation}
\end{lem}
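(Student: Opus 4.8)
The plan is to handle the single indefinite term $2cwv$ in the integrand of \eqref{eq:lyapunov function} by a weighted arithmetic–geometric mean inequality, exploiting \eqref{eq:condition} so that its contribution is strictly dominated by the terms $kw^2+\rho a v^2$ already present in the norm \eqref{eq: norm}, and then to absorb the quartic term $\frac{\alpha}{2}w^4$ into a power of $\normm{\state}$ using a Sobolev embedding. Pointwise in $\xi$ one has $2|wv|\le \frac{1}{\sqrt{\rho k a}}\,(kw^2+\rho a v^2)$, so setting $\theta:=c/\sqrt{\rho k a}$, condition \eqref{eq:condition} gives $0<\theta<1$ and
\[
\left|\int_0^\ell 2cwv\,d\xi\right|\le \theta\int_0^\ell kw^2+\rho a v^2\,d\xi .
\]

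For the lower bound I would simply discard the nonnegative term $\frac{\alpha}{2}w^4$ and apply the estimate above, obtaining
\[
V(\state)\ \ge\ \int_0^\ell EIw_{\xi\xi}^2\,d\xi+(1-\theta)\int_0^\ell kw^2+\rho a v^2\,d\xi\ \ge\ (1-\theta)\normm{\state}^2 ,
\]
so one may take $c_l=1-\theta>0$.

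For the upper bound I would instead keep the quartic term and bound the cross term from above by the same inequality, which gives $V(\state)\le (1+\theta)\normm{\state}^2+\frac{\alpha}{2}\int_0^\ell w^4\,d\xi$. It then remains to show $\int_0^\ell w^4\,d\xi\le C\normm{\state}^4$. Since $w\in H^2(0,\ell)\cap H_0^1(0,\ell)$, the boundary conditions $w(0)=w(\ell)=0$ force $w_\xi$ to have zero mean, so Poincar\'e's inequality yields $\normm{w_\xi}_{L^2}\le C_1\normm{w_{\xi\xi}}_{L^2}$ and then $\normm{w}_{L^\infty}\le \sqrt{\ell}\,\normm{w_\xi}_{L^2}\le C_2\normm{w_{\xi\xi}}_{L^2}$, with $C_1,C_2$ depending only on $\ell$. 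Because $EI\,\normm{w_{\xi\xi}}_{L^2}^2\le \normm{\state}^2$, this gives $\normm{w}_{L^\infty}^2\le (C_2^2/EI)\normm{\state}^2$, hence $\int_0^\ell w^4\,d\xi\le \ell\,\normm{w}_{L^\infty}^4\le (\ell C_2^4/(EI)^2)\normm{\state}^4$, and \eqref{eq:Vbound} follows with $c_u=1+\theta$ and $c_h=\alpha\ell C_2^4/(2(EI)^2)$.

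No step here is a genuine obstacle; the argument is elementary once the cross term is absorbed. The only point that deserves attention is the quartic term: since $w^4$ grows super-quadratically, the upper estimate cannot be made purely quadratic in $\normm{\state}$, which is exactly why the statement allows the extra summand $c_h\normm{\state}^4$, and the Poincar\'e/Sobolev chain above is precisely what converts $\normm{w}_{L^4}^4$ into a power of the state-space norm.
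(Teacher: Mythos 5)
Your proof is correct and follows essentially the same route as the paper: a Young-type inequality (you fix the optimal weight $\epsilon_1=\sqrt{k/(\rho a)}$ where the paper keeps $\epsilon_1$ as a free parameter) absorbs the cross term $2cwv$ under the condition $c<\sqrt{\rho k a}$, giving $c_l>0$ and $c_u$, and the quartic term is then bounded by a multiple of $\normm{\state}^4$. Your explicit Poincar\'e/fundamental-theorem chain bounding $\|w\|_{L^\infty}$ by $\|w_{\xi\xi}\|_{L^2}$ in fact supplies the detail behind the inequality $\int_0^\ell w^4\,d\xi\le c_e\bigl(\int_0^\ell w_{\xi\xi}^2\,d\xi\bigr)^2$ that the paper attributes to the embedding $H^2(0,\ell)\hookrightarrow L^4(0,\ell)$.
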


\begin{proof}
Young's inequality implies that for all $\epsilon_1>0$
\begin{flalign}
\Big| \int_0^{\ell} 2cwv \, d\xi\Big| \le c\int_0^\ell \epsilon_1 w^2+\frac{1}{\epsilon_1}v^2 \, d\xi.\label{eq:young}
\end{flalign}
This inequality gives the following lower bound on $V$:
\begin{flalign}
V(\state)&\ge\int_0^{\ell} EI (w_{\xi \xi})^2+\left( k-c\epsilon_1\right) w^2+\frac{\alpha}{2} w^4\notag\\
&\qquad +\left(\rho a-\frac{c}{\epsilon_1}\right) v^2 \, d\xi. \label{eq:stronger}
\end{flalign}
Define 
\begin{equation}
c_l=\min\{1-\frac{c\epsilon_1}{k},1-\frac{c}{\rho a\epsilon_1}\}.
\end{equation}
The condition (\ref{eq:condition}) on $c$ implies that there exists a number $\epsilon_1$ satisfying $c/{\rho a}<\epsilon_1<k/c$, which ensures that $c_l >0$.
The inequality (\ref{eq:stronger}) can then be re-written as 
\begin{equation}
V(\state)\ge c_l\normm{\state}^2. \label{eq:d1}
\end{equation} 

Furthermore, apply the inequality (\ref{eq:young}) to (\ref{eq:lyapunov function}) and define $c_u=1+\max\{c\epsilon_1/k,c/(\epsilon_1\rho a)\}$ to obtain
\begin{flalign}
V(\state)&\le \int_0^{\ell} EIw_{\xi \xi}^2+(k+c\epsilon_1)w^2+\frac{\alpha}{2}w^4\notag\\
&\qquad+(\rho a+\frac{c}{\epsilon_1})v^2 \, d\xi \notag\\
&\le c_u\normm{\state}^2+\frac{\alpha}{2}\int_0^{\ell} w^4 \, d\xi.
\end{flalign}
Recall  the continuous embedding $H^2(0,\ell)\hookrightarrow L^4(0,\ell)$. Letting $c_e$ be the embedding constant, 
\begin{flalign}
V(\state)&\le  c_u\normm{\state}^2+\frac{\alpha}{2}\int_0^{\ell} w^4 \, d\xi\notag\\
&\le c_u\normm{\state}^2+\frac{\alpha c_e}{2}\left(\int_0^{\ell} w_{\xi \xi}^2\, d\xi\right)^2 \notag \\
&\le c_u\normm{\state}^2+\frac{\alpha c_e}{2(EI)^2}\normm{\state}^4. \label{eq:upper}
\end{flalign}
Set $c_h=\alpha c_e/{2(EI)^2}$ in the above inequality to complete the proof.
\end{proof}

The derivative of the Lyapunov function along trajectories of the railway track IVP exists for every $\state_0\in D(\A)$ and continuously differentiable input. In the next lemma, if $C_d=0$, set $1/C_d=\infty$.

\begin{lem}\label{lem:derivative}
Let $[0,T]$ be the interval of existence of the classical solution $\state(t)$ to the railway track IVP with $\state_0\in D(\A)$ and $\inp \in C^1([0,T];\cs)$. Then, the Lyapunov function $V(\state(t))$ is differentiable with respect to time. Moreover, let $c$ satisfy 
\begin{equation}
0<c< \min\left\{\frac{4\rho aEI}{C_d},\frac{4\rho a k\mu}{\mu^2+4\rho a k },\frac{4\rho a k\mu}{1+4\rho a k}\right\}.
\end{equation}
Then, there are positive constants $\epsilon_3$ and $\omega$ such that the derivative $\dot{V}(\state(t))$ satisfies for all $t\in [0,T]$
\begin{equation}
\dot{V}(\state(t))\le \epsilon_3\norm{\inp (t)}{\cs}^2-\omega V(\state(t)). \label{expdecay}
\end{equation}
\end{lem}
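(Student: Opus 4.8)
The plan is to differentiate $V(\state(t))$ along the classical solution, integrate by parts using the boundary conditions, and collect terms to obtain a dissipation inequality of the form \eqref{expdecay}. First I would note that since $\state_0 \in D(\A)$ and $\inp \in C^1([0,T];\cs)$, Theorem~\ref{thm:existence classical} guarantees the mild solution is classical, so $w(\cdot,t), v(\cdot,t)$ have enough regularity ($w \in D(\A_0)$-type smoothness via $EIw + C_d v \in D(\A_0)$, $v \in H^2 \cap H_0^1$) to justify differentiating under the integral sign and all the integrations by parts below. Differentiating \eqref{eq:lyapunov function} gives
\begin{flalign}
\dot V &= \int_0^\ell 2EI\, w_{\xi\xi} v_{\xi\xi} + 2kwv + 2\alpha w^3 v + 2\rho a\, v \dot v + 2c(v^2 + w\dot v)\, d\xi, \notag
\end{flalign}
and then I would substitute $\rho a\, \dot v = -(EIw + C_d v)_{\xi\xi\xi\xi} - \mu v - kw - \alpha w^3 + u$ from the PDE. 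The terms $2\rho a v\dot v$ produce $-2v(EIw+C_dv)_{\xi\xi\xi\xi} - 2\mu v^2 - 2kwv - 2\alpha w^3 v + 2vu$; integrating the fourth-order term by parts twice and using $v(0)=v(\ell)=0$ together with the natural boundary conditions $EIw_{\xi\xi}+C_dv_{\xi\xi}=0$ at $\xi=0,\ell$ turns it into $-2\int_0^\ell (EIw_{\xi\xi}+C_dv_{\xi\xi})v_{\xi\xi}\,d\xi$. This cancels the $2EI w_{\xi\xi}v_{\xi\xi}$ term and leaves the genuinely dissipative contribution $-2C_d\int_0^\ell v_{\xi\xi}^2\,d\xi$. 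So far the $kwv$ and $\alpha w^3 v$ terms have cancelled cleanly as well.

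The remaining expression is
\begin{flalign}
\dot V &= -2C_d\!\int_0^\ell v_{\xi\xi}^2 d\xi - 2\mu\!\int_0^\ell v^2 d\xi + 2c\!\int_0^\ell v^2 d\xi + 2\!\int_0^\ell v u\, d\xi + \frac{2c}{\rho a}\!\int_0^\ell w\dot v\, d\xi, \notag
\end{flalign}
and for the last term I substitute $\rho a \dot v$ again, producing the cross terms $-\frac{2c}{\rho a}\int_0^\ell w(EIw+C_dv)_{\xi\xi\xi\xi}\,d\xi$ (integrate by parts twice: $=-\frac{2c}{\rho a}\int_0^\ell w_{\xi\xi}(EIw_{\xi\xi}+C_dv_{\xi\xi})\,d\xi$, using $w(0)=w(\ell)=0$ and the natural BCs), $-\frac{2c\mu}{\rho a}\int_0^\ell wv\,d\xi$, $-\frac{2ck}{\rho a}\int_0^\ell w^2 d\xi$, $-\frac{2c\alpha}{\rho a}\int_0^\ell w^4 d\xi$, and $\frac{2c}{\rho a}\int_0^\ell wu\,d\xi$. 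The leading cross term gives $-\frac{2cEI}{\rho a}\int_0^\ell w_{\xi\xi}^2 d\xi$ plus $-\frac{2cC_d}{\rho a}\int_0^\ell w_{\xi\xi}v_{\xi\xi}\,d\xi$. Now the heart of the argument is an absorption step: bound the indefinite cross terms $-\frac{2cC_d}{\rho a}\int w_{\xi\xi}v_{\xi\xi}$, $-\frac{2c\mu}{\rho a}\int wv$, $2\int vu$, $\frac{2c}{\rho a}\int wu$ by Young's inequality with carefully chosen weights $\epsilon_2,\epsilon_3,\dots$, so that the resulting negative multiples of $\int w_{\xi\xi}^2$, $\int w^2$, $\int v^2$, $\int v_{\xi\xi}^2$, and $\int w^4$ dominate. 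The three upper bounds on $c$ in the statement are exactly the thresholds that make this possible: $c < 4\rho a EI/C_d$ keeps the coefficient of $\int w_{\xi\xi}^2$ negative after absorbing the $w_{\xi\xi}v_{\xi\xi}$ cross term into the $C_d\int v_{\xi\xi}^2$ and $EI\int w_{\xi\xi}^2$ reserves; the two bounds involving $4\rho a k\mu$ keep the coefficients of $\int v^2$ and $\int w^2$ negative after absorbing $-\frac{2c\mu}{\rho a}\int wv$ and a portion of the input cross terms. After this, one discards the leftover $-\frac{2c\alpha}{\rho a}\int w^4 \le -\,(\text{const})\int \frac{\alpha}{2}w^4$ term (or keeps a fraction of it) and arrives at
\[
\dot V \le \epsilon_3 \norm{\inp(t)}{\cs}^2 - \omega\!\int_0^\ell EIw_{\xi\xi}^2 + kw^2 + \tfrac{\alpha}{2}w^4 + \rho a v^2\, d\xi
\]
for a suitable small $\omega>0$; since the integrand here differs from $V$ only by the $2cwv$ term, a further application of Young's inequality (as in Lemma~\ref{lem:bound}, using $c < \sqrt{\rho k a}$, which is implied by the stated bounds) converts the bracketed integral into a constant multiple of $V(\state(t))$, giving \eqref{expdecay} after renaming constants.

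The main obstacle is the bookkeeping in the absorption step: there are several indefinite cross terms ($C_d$-cross, $\mu$-cross, and two input-cross terms), and the Young's-inequality weights must be chosen simultaneously so that \emph{every} "good" coefficient remains strictly negative. This is where the precise form of the three upper bounds on $c$ comes from, and verifying that a common admissible choice of weights exists under exactly those bounds is the delicate part; the differentiability-of-$V$ claim and the integrations by parts are routine given classical regularity. I would handle the degenerate case $C_d=0$ by noting the convention $1/C_d=\infty$ makes the first bound vacuous, and then the $w_{\xi\xi}v_{\xi\xi}$ cross term is simply absent, so the $-\frac{2cEI}{\rho a}\int w_{\xi\xi}^2$ term survives intact with no competition.
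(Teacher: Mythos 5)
Your proposal is correct and follows essentially the same route as the paper: differentiate $V$ along the classical solution, substitute the PDE, integrate by parts so the boundary terms vanish and the $EI\,w_{\xi\xi}v_{\xi\xi}$, $kwv$, $\alpha w^3v$ terms cancel, absorb the cross terms ($w_{\xi\xi}v_{\xi\xi}$, $wv$, and the two input terms) by Young's inequality, and finally compare the resulting dissipated integral with $V$ via one more Young estimate. The only step you defer --- exhibiting a simultaneous admissible choice of the Young weights, which the paper does by writing explicit interval conditions on $\epsilon_1,\epsilon_2,\epsilon_3$ whose solvability is exactly equivalent to the three stated upper bounds on $c$ --- is routine, and your side remarks (the bounds on $c$ imply $c<\sqrt{\rho k a}$, and $C_d=0$ removes the first constraint) are accurate.
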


\begin{proof}
Since $\state_0\in D(\mc{A})$ and $\inp \in C^1([0,T];\cs)$, the state $\state(t)$ is differentiable on $[0,T]$.
The derivative of the Lyapunov function along trajectories of the railway track IVP is
\begin{flalign}
\dot{V}(\state(t))=2 \int_0^{\ell} &EI w_{\xi \xi} \dot{w}_{\xi \xi}+k w\dot{w}+ \alpha w^3 \dot{w}+\rho a \dot{v}v\notag\\
&+c\dot{w}v+cw\dot{v}\, d\xi.
\end{flalign}
Substituting the time derivatives from the railway track model (\ref{eq:rail-IVP}) leads to
\begin{flalign}
\dot{V}&(\state(t))=2 \int_0^{\ell} EI w_{\xi \xi} v_{\xi \xi}+k wv+ \alpha w^3 v\notag\\
&-v((EIw+C_dv)_{\xi \xi \xi \xi}+kw+\mu v+\alpha w^3+u(\xi,t)) \notag \\
&+cv^2-\frac{c}{\rho a} w((EIw+C_dv)_{\xi \xi \xi \xi}+kw+\mu v\notag\\
&+\alpha w^3+u(\xi,t)) \, d\xi.
\end{flalign}
Performing repeated integration by parts and using the boundary conditions lead to  
\begin{flalign}
&\dot{V}(\state(t))=-2\left[\left(v+\frac{c}{\rho a}w\right)(EIw+C_dv)_{\xi \xi \xi}\right.\notag\\
&\left.-\left(v_{\xi}+\frac{c}{\rho a}w_{\xi}\right)(EIw+C_dv)_{\xi \xi}\right]_0^{\ell}\notag\\
&-2\int_0^{\ell} \frac{EIc}{\rho a}(w_{\xi \xi})^2+\frac{kc}{\rho a}w^2+\frac{\alpha c}{\rho a}w^4+(\mu-c)v^2\notag\\
&\allowdisplaybreaks+C_dv_{\xi \xi}^2\,d\xi-2\int_0^{\ell}\frac{\mu c}{\rho a}wv+\frac{C_d c}{\rho a} w_{\xi \xi}v_{\xi \xi}\, d\xi\notag\\
&-2\int_0^{\ell}u(\xi,t)(v+\frac{c}{\rho a}w) \, d\xi.
\end{flalign}
Young's inequalities (such as inequality (\ref{eq:young})) are used to bound  the product terms.
 Letting $\epsilon_1$, $\epsilon_2$ and $\epsilon_3$ be positive constants,
\begin{flalign}
\dot{V}(\state(t))\le \epsilon_3\norm{\inp (t)}{\cs}^2&-\frac{2}{\rho a}\int_0^{\ell}(EI-\frac{C_d\epsilon_2}{2})c(w_{\xi \xi})^2\notag\\
&+(k-\frac{\mu\epsilon_1}{2}-\frac{c}{2\rho a\epsilon_3})c\,w^2+\alpha c \,w^4\notag\\
&+(\rho a\mu-\rho ac-\frac{\mu c}{2\epsilon_1}-\frac{\rho a \epsilon_3}{2})v^2\notag\\
&+(\rho a-\frac{c}{2\epsilon_2})C_d(v_{\xi \xi})^2\, d\xi \label{eq: expdecay}.
\end{flalign}
Define  the constant 
\begin{flalign}
 \omega_0=\frac{2c}{\rho a}\min& \left\lbrace 1-\frac{C_d \epsilon_2}{2EI},1-\frac{\mu\epsilon_1}{2k}-\frac{c}{2\rho a \epsilon_3 k},\right.\notag\\
&\quad \left. \frac{\mu}{c}-1-\frac{\mu}{2\epsilon_1\rho a}-\frac{\epsilon_3}{2c},\frac{\rho a}{c}-\frac{1}{2\epsilon_2} \right\rbrace . \label{constC}
\end{flalign}
The constant $\omega_0$ needs to be positive. Thus, the constants $\epsilon_1$, $\epsilon_2$, and $\epsilon_3$ are required to satisfy 
\begin{flalign}
\allowdisplaybreaks 0&<\epsilon_1<\frac{2k}{\mu}-\frac{c}{\rho a \epsilon_3 \mu},\label{eq:e1}\\
\allowdisplaybreaks \frac{c}{2\rho a}&<\epsilon_2<\frac{2EI}{C_d},\label{eq:e2}\\
\allowdisplaybreaks 0&<\epsilon_3<2\mu-2c-\frac{\mu c}{\epsilon_1\rho a}.\label{eq:e3}
\end{flalign} 
There is a positive number $\epsilon_2$ satisfying (\ref{eq:e2}) if
\begin{equation}
0<c< \frac{4\rho aEI}{C_d}.
\end{equation}
Also, inequalities (\ref{eq:e1}) and (\ref{eq:e3}) have a solution for $\epsilon_1$ and $\epsilon_3$ if 
\begin{flalign}
0<c<\min\left\{\frac{4\rho a k \mu}{\mu^2+4\rho a k },\frac{4\rho a k \mu}{1+4\rho a k}\right\}.
\end{flalign}
Inequality (\ref{eq: expdecay}) can then be re-written as
\begin{flalign}
&\dot{V}(\state(t))\le \epsilon_3\norm{\inp (t)}{\cs}^2 \label{Lypine1}\\
&-\omega_0 \int_0^{\ell} EI (w_{\xi \xi})^2+kw^2+\alpha w^4+\rho a v^2+C_d(v_{\xi \xi})^2\, d\xi.\notag
\end{flalign}
Using Young's inequality, an upper bound on the Lyapunov function is
\begin{equation}
	V(\state(t))\le r\int_0^{\ell} EI (w_{\xi \xi})^2+kw^2+\alpha w^4+\rho a v^2\, d\xi, \label{Lypine2}
\end{equation}
where
\begin{equation}
r=1+\max\{\frac{c\epsilon_1}{k},\frac{c}{\rho a\epsilon_2}\}.	
\end{equation}
Use this upper bound in (\ref{Lypine1})
\begin{equation}
	\dot{V}(\state(t))\le \epsilon_3\norm{\inp (t)}{\cs}^2-\frac{\omega_0}{r} V(\state(t)).
\end{equation}
Set $\omega =\omega_0/r$ to complete the proof.
\end{proof}

The next theorem uses the Lyapunov function to show that a unique classical solution exists on arbitrary intervals of time for a large class of inputs. It also ensures exponential stability of the solution for some inputs.

\begin{thm}
\label{thm-classical}
Let $c_l$, $c_u$, $c_h$, $\omega$, and $\epsilon_3$ be the same constants as in Lemma \ref{lem:bound} and Lemma \ref{lem:derivative}. If $\state_0 \in D(\mc{A})$ and $\inp \in C^1(\mathbb{R}^+;\cs)$, then the unique classical solution $\state(t)$ to the railway track IVP exists for all $t\ge0$ and satisfies
\begin{flalign}\label{ineq3}
\normm{\state(t)}^2\le e^{-\omega t}&\left(\frac{c_u}{c_l}\normm{\state_0}^2+\frac{c_h}{c_l}\normm{\state_0}^4\right)\notag\\
&+\frac{\epsilon_3}{\omega}(1-e^{-\omega t})\max_{s\in[0,t]}\norm{\inp (s)}{\cs}^2.
\end{flalign}
Moreover, if there are positive constants $u_0$ and $\delta$ so that $\norm{\inp( t)}{\cs}\le u_0e^{-\delta t}$, then $\| \state(t) \|$ exponentially decays to zero. \label{thm:lyapunov}
\end{thm}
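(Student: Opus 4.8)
\emph{Proof proposal.} The plan is to combine the local existence theory with the dissipation estimate of Lemma~\ref{lem:derivative} and a continuation (blow-up) argument. First I would fix a constant $c>0$ lying simultaneously in the interval $(0,\sqrt{\rho k a})$ required by Lemma~\ref{lem:bound} and in the interval required by Lemma~\ref{lem:derivative}; both are nonempty open intervals of the form $(0,\cdot)$, so such a $c$ exists, and for it the constants $c_l,c_u,c_h,\omega,\epsilon_3$ are all well defined and positive. Since $\state_0\in D(\mc{A})$ and $\inp\in C^1(\R^+;\cs)$, Theorem~\ref{thm:existence mild} and Theorem~\ref{thm:existence classical} give a unique classical solution $\state$ on a maximal interval $[0,T_{\max})$, and the standard blow-up alternative for semilinear equations (see, e.g., \cite[Thm.~6.1.4]{pazy}) asserts that either $T_{\max}=\infty$ or $\limsup_{t\to T_{\max}^-}\normm{\state(t)}=\infty$.

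Next I would derive the a priori bound on $[0,T_{\max})$. By Lemma~\ref{lem:derivative}, the map $t\mapsto V(\state(t))$ is differentiable and satisfies $\dot V(\state(t))\le \epsilon_3\norm{\inp(t)}{\cs}^2-\omega V(\state(t))$ for $t\in[0,T_{\max})$. Multiplying by the integrating factor $e^{\omega t}$ and integrating from $0$ to $t$ yields
\[
V(\state(t))\le e^{-\omega t}V(\state_0)+\epsilon_3\int_0^t e^{-\omega(t-s)}\norm{\inp(s)}{\cs}^2\,ds\le e^{-\omega t}V(\state_0)+\frac{\epsilon_3}{\omega}\bigl(1-e^{-\omega t}\bigr)\max_{s\in[0,t]}\norm{\inp(s)}{\cs}^2 ,
\]
where the last step uses $\int_0^t e^{-\omega(t-s)}ds=\omega^{-1}(1-e^{-\omega t})$. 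Combining the lower bound $c_l\normm{\state(t)}^2\le V(\state(t))$ with the upper bound $V(\state_0)\le c_u\normm{\state_0}^2+c_h\normm{\state_0}^4$ from Lemma~\ref{lem:bound} and dividing by $c_l$ produces the estimate (\ref{ineq3}) on $[0,T_{\max})$.

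Then I would close the global-existence step. Since $\inp\in C^1(\R^+;\cs)$ is bounded on every compact interval, the right-hand side of (\ref{ineq3}) is finite and bounded on $[0,T]$ for each finite $T<T_{\max}$; hence $\normm{\state(t)}$ stays bounded as $t\uparrow T_{\max}$ whenever $T_{\max}<\infty$, contradicting the blow-up alternative. Therefore $T_{\max}=\infty$ and (\ref{ineq3}) holds for all $t\ge 0$. Finally, for the decay claim I would substitute $\norm{\inp(t)}{\cs}\le u_0e^{-\delta t}$ into the convolution term and evaluate: $\int_0^t e^{-\omega(t-s)}u_0^2e^{-2\delta s}\,ds$ equals $\tfrac{u_0^2}{\omega-2\delta}(e^{-2\delta t}-e^{-\omega t})$ when $\omega\neq 2\delta$ and $u_0^2\,t\,e^{-\omega t}$ when $\omega=2\delta$; in both cases it is dominated by $Ce^{-\nu t}$ with $\nu=\min\{\omega,2\delta\}$ (taking $\nu$ slightly smaller in the resonant case to absorb the factor $t$). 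Together with the exponential decay of $e^{-\omega t}V(\state_0)$ this gives $V(\state(t))\le C'e^{-\nu t}$, and $\normm{\state(t)}^2\le V(\state(t))/c_l$ then decays exponentially to zero.

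The main obstacle is the continuation argument: one must check that the differential inequality of Lemma~\ref{lem:derivative} is valid on the \emph{whole} maximal interval of existence (it is, since that lemma is proved on any interval on which a classical solution exists), and that the resulting a priori bound genuinely rules out finite-time blow-up; once these are in place the remaining manipulations are routine Gr\"onwall and Young estimates.
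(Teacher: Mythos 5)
Your proposal is correct and follows essentially the same route as the paper: a Gr\"onwall estimate on the Lyapunov inequality of Lemma~\ref{lem:derivative}, conversion to a state bound via Lemma~\ref{lem:bound}, a continuation argument based on \cite[Thm.~6.1.4]{pazy} (the paper extends the solution on each fixed $[0,\bar T]$ rather than phrasing it as a blow-up alternative on a maximal interval, but the substance is identical), and the same convolution computation for the exponential-decay claim. Your handling of the resonant case $\omega=2\delta$, where the convolution produces $t\,e^{-\omega t}$ and the factor $t$ must be absorbed by slightly reducing the decay rate, is in fact a touch more careful than the paper's displayed formula.
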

\begin{proof}
For every $\bar{T}>0$, consider the input $\inp$ over the bounded interval $[0,\bar{T}]$ and define $R\coloneqq\norm{\inp}{{L^2(0,\bar{T};\cs)}}$. According to Theorem \ref{thm:existence mild} and \ref{thm:existence classical}, for every $\state_0\in D(\A)$ and $\inp \in C^1([0,\bar{T}];\cs)$, with $\norm{\inp}{{L^2(0,\bar{T};\cs)}}\le R$, there is an interval $[0,T]$, $T=T(\state_0,R)\le \bar{T}$, over which a classical solution to the railway track model (\ref{eq:rail-IVP}) exists.

Now use Lemma \ref{lem:derivative}, and apply Gr\"onwall's lemma \cite[Thm.~1.4.1]{zettl2005} to inequality (\ref{expdecay}) to obtain
\begin{equation}
V(\state(t))\le e^{-\omega t}V(\state_0)+\epsilon_3\int_0^t e^{-\omega(t-s)}\norm{\inp (s)}{\cs}^2ds, \label{eq:lyapunov upper bound}
\end{equation}
for all $t\in[0,T]$.
This yields
\begin{equation}
V(\state(t))\le e^{-\omega t}V(\state_0)+\frac{\epsilon_3}{\omega}(1-e^{-\omega t})\max_{s\in[0,t]}\norm{\inp (s)}{\cs}^2. \label{eq:lyapunov upper bound2}
\end{equation}
From  Lemma  \ref{lem:bound}, it follows that for all $t\in[0,T]$, 
\begin{flalign}\label{ineq4}
\allowdisplaybreaks\normm{\state(t)}^2\le e^{-\omega t}&\left(\frac{c_u}{c_l}\normm{\state_0}^2+\frac{c_h}{c_l}\normm{\state_0}^4\right)\notag\\
\allowdisplaybreaks&+\frac{\epsilon_3}{\omega}(1-e^{-\omega t})\max_{t\in[0,\bar T ]}\norm{\inp (t)}{\cs}^2.
\end{flalign}
This classical solution is of course also a mild solution on $[0,T].$ Note that $\| \state (t) \| \leq M$ where $M$ is independent of $T .$ Using \cite[Thm. 6.1.4]{pazy} the solution $\state(t)$ can be extended to $[0, \bar T]. $ Since $\bar T$ was arbitrary, the mild solution exists for all $t>0.$ Since $\state_0 \in D(\A) $ and $\inp \in C^1(\mathbb{R}^+;\cs)$, Theorem \ref{thm:existence classical} then implies that this mild solution is also a classical solution.

Furthermore, if there are positive constants $u_0$ and $\delta$ so that $\norm{\inp(t)}{\cs}\le u_0e^{-\delta t}$, inequality (\ref{eq:lyapunov upper bound}) yields
\begin{equation}
V(\state(t))\le e^{-\omega t} V(\state_0)+\epsilon_3 u^2_0
\left\{ \begin{array}{ll} \frac{e^{-2\delta t}-e^{-\omega t}}{{\omega-2\delta}} &  \omega \neq 2 \delta \\[1ex]
e^{-\omega t}  & \omega = 2 \delta \end{array} \right. .
\notag
\end{equation}
This shows that the Lyapunov function exponentially decays to zero. Since the Lyapunov function $V(\state)$  bounds  the norm of the state by Lemma \ref{lem:bound}, the state will also exponentially decay to zero.
\end{proof}
For inputs with $\sup_{t \geq 0}\norm{\inp (t)}{\cs}< \infty$, Lemma \ref{lem:bound} and Lemma \ref{lem:derivative} result in
\begin{equation}
\dot{V}(\state(t))\le -\omega c_l \normm{\state(t)}^2+\epsilon_3\sup_{t \geq 0}\norm{\inp (t)}{\cs}^2.
\end{equation}
From Definition \ref{def:ISS Lyapunov}, this inequality shows that the Lyapunov function is an ISS Lyapunov function on $D=D(\A)$.

\section{STABILITY of MILD SOLUTIONS}
If the initial condition is not in $D(\A)$ or the input is not continuously differentiable, there may be a unique mild solution to (\ref{eq: IVP}) even though a classical solution may not exist. In this case, the Lyapunov function from Theorem \ref{thm-classical}   may not be differentiable. Thus, exponential decay cannot be shown through manipulating the derivative of Lyapunov function. However, the proof of Theorem \ref{thm:lyapunov} can be modified to yield a result that ensures the existence and stability of global mild solutions for initial conditions in $\ss$ and inputs in $L^2_{loc}(0,\infty;\cs)$.
\begin{thm}\label{thm:mild}
Let $c_l$, $c_u$, $c_h$, $\omega$, and $\epsilon_3$ be the same constants as in Lemma \ref{lem:bound} and Lemma \ref{lem:derivative}. If  $\state_0\in \ss$ and $\inp \in L^2_{loc}(0,\infty;\cs)$, then the unique mild solution, $\state(t)$, to the railway track IVP exists globally. For every $t>0$, the mild solution satisfies
\begin{flalign}\label{ineq2}
\norm{\state}{{C(0,t;\ss)}}^2+\omega\norm{\state}{{L^2(0,t;\ss)}}^2\le&\frac{c_u}{c_l}\normm{\state_0}^2+\frac{c_h}{c_l}\normm{\state_0}^4\notag\\
&+\frac{\epsilon_3}{c_l}\norm{\inp}{{L^2(0,t;\cs)}}^2. 
\end{flalign}
\end{thm}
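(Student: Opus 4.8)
The plan is to obtain Theorem~\ref{thm:mild} from Theorem~\ref{thm-classical} by a density argument: approximate the datum $\state_0\in\ss$ and the input $\inp\in L^2_{loc}(0,\infty;\cs)$ by data for which a global classical solution is available, transfer the Lyapunov estimate to those approximants, and pass to the limit. First I would fix an arbitrary horizon $\bar T>0$, set $R\coloneqq\norm{\inp}{L^2(0,\bar T;\cs)}$, and, using density of $D(\A)$ in $\ss$ and of $C^1([0,\bar T];\cs)$ in $L^2(0,\bar T;\cs)$, pick $\state_0^n\in D(\A)$ with $\state_0^n\to\state_0$ in $\ss$ and $\inp^n\in C^1([0,\bar T];\cs)$ with $\inp^n\to\inp$ in $L^2(0,\bar T;\cs)$; for $n$ large, $\normm{\state_0^n}\le\normm{\state_0}+1$ and $\norm{\inp^n}{L^2(0,\bar T;\cs)}\le R+1$. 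By Theorem~\ref{thm-classical} each approximating problem has a global classical solution $\state^n$, and integrating the differential inequality (\ref{expdecay}) of Lemma~\ref{lem:derivative} over $[0,\tau]$ yields, for every $\tau\in[0,\bar T]$,
\[
V(\state^n(\tau))+\omega\int_0^\tau V(\state^n(s))\,ds\le V(\state_0^n)+\epsilon_3\int_0^\tau\norm{\inp^n(s)}{\cs}^2\,ds .
\]
Together with Lemma~\ref{lem:bound} this bounds $\state^n$ uniformly in $n$ in $C([0,\bar T];\ss)\cap L^2(0,\bar T;\ss)$.

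Next I would show that the mild solution $\state$ with data $(\state_0,\inp|_{[0,\bar T]})$ actually exists on all of $[0,\bar T]$. Theorem~\ref{thm:existence mild} gives a mild solution on a maximal interval $[0,T_{\max})$; suppose $T_{\max}\le\bar T$. For any $T_1<T_{\max}$ the function $\state$ is continuous, hence bounded, on $[0,T_1]$, so on the bounded set containing the $\state^n$ and $\state$ the locally Lipschitz map $\F(\cdot)$ has a uniform Lipschitz constant; the contraction estimate underlying the proof of Theorem~\ref{thm:existence mild}, combined with Gr\"onwall's lemma applied across a finite partition of $[0,T_1]$, then gives $\state^n\to\state$ in $C([0,T_1];\ss)$. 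Letting $n\to\infty$ in the displayed inequality and using continuity of $V$ on $\ss$ yields $V(\state(\tau))\le V(\state_0)+\epsilon_3R^2$ for all $\tau\in[0,T_1]$, hence, by Lemma~\ref{lem:bound}, $\normm{\state(\tau)}^2\le(V(\state_0)+\epsilon_3R^2)/c_l$ for all $\tau<T_{\max}$. This a-priori bound contradicts the blow-up alternative \cite[Thm.~6.1.4]{pazy}, so $T_{\max}>\bar T$; since $\bar T$ was arbitrary, the mild solution exists globally.

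Finally, to get (\ref{ineq2}), I would fix $t>0$, repeat the approximation on $[0,t]$ (now $\state^n\to\state$ in $C([0,t];\ss)$, both functions being bounded there), and pass to the limit in
\[
V(\state^n(\tau))+\omega\int_0^\tau V(\state^n(s))\,ds\le V(\state_0^n)+\epsilon_3\int_0^\tau\norm{\inp^n(s)}{\cs}^2\,ds ,\qquad \tau\in[0,t].
\]
Uniform convergence $\state^n\to\state$ on $[0,t]$, uniform continuity of $V$ on the bounded set where these trajectories live, $\state_0^n\to\state_0$, and $\norm{\inp^n}{L^2(0,t;\cs)}\to\norm{\inp}{L^2(0,t;\cs)}$ give in the limit
\[
V(\state(\tau))+\omega\int_0^\tau V(\state(s))\,ds\le V(\state_0)+\epsilon_3\norm{\inp}{L^2(0,t;\cs)}^2 ,\qquad \tau\in[0,t].
\]
Since the right-hand side is nondecreasing in $t$, taking the supremum over $\tau\in[0,t]$ on the left and invoking the two-sided estimate of Lemma~\ref{lem:bound} — $c_l\normm{\state(\tau)}^2\le V(\state(\tau))$ pointwise, $c_l\norm{\state}{L^2(0,t;\ss)}^2\le\int_0^tV(\state(s))\,ds$, and $V(\state_0)\le c_u\normm{\state_0}^2+c_h\normm{\state_0}^4$ — converts this into the asserted bound (\ref{ineq2}).

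I expect the main obstacle to be the middle step: making the continuous dependence of the mild solution on $(\state_0,\inp)$ rigorous \emph{uniformly} on a fixed time interval. The local theory only delivers continuous dependence on a short interval whose length and Lipschitz constant depend on the size of the state, so extending it to $[0,T_1]$ requires chaining the contraction estimate over a finite partition and using that $\state$ is already known to be bounded on that interval; this is the infinite-dimensional analogue of the density argument in \cite[Lemma~2.2.3]{mironchenThesis}. The remaining ingredients — the density selections, integrating the Lyapunov inequality, ruling out blow-up via the a-priori bound and the continuation theorem, and passing continuous functionals through uniform limits — are routine.
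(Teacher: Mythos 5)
Your proposal follows essentially the same route as the paper's proof: approximate $(\state_0,\inp)$ by data in $D(\A)$ and $C^1$ inputs, integrate the Lyapunov inequality of Lemma~\ref{lem:derivative} along the classical approximating solutions, pass to the limit using continuous dependence of mild solutions on initial data and inputs (the paper simply cites \cite[Thm.~6.1.2]{pazy} and \cite[Proposition~5.2]{edalatzadehSICON} where you re-derive it by chaining the contraction/Gr\"onwall estimate), apply Lemma~\ref{lem:bound}, and conclude global existence via \cite[Thm.~6.1.4]{pazy}. The only organizational difference is that you obtain global classical approximants from Theorem~\ref{thm-classical} and rule out finite-time blow-up by contradiction, whereas the paper works on the local existence interval $[0,T]$ and extends the solution directly from the resulting a-priori bound; this is a cosmetic variation of the same density-plus-Lyapunov argument.
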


\begin{proof}
For every $\bar{T}>0$, consider the input $\inp$ over the bounded interval $[0,\bar{T}]$ and define $R\coloneqq\norm{\inp}{{L^2(0,\bar{T};\cs)}}$. According to Theorem \ref{thm:existence mild}, for every $\state_0\in \ss$ and $R$, there is an interval $[0,T]$, $T\le \bar{T}$, over which a unique mild solution $\state(t)$ to the railway track IVP exists. Pick a sequence $\inp_n\in C^{1}([0,T];\cs)$, with $\norm{\inp_n}{{L^2(0,T;\cs)}}\le R$, for all $n\in \mathbb{N}$, that converges to $\inp$ in $L^2(0,T;\cs)$. Also, pick a sequence of initial conditions $\state_0^n\in D(\A)$ that converges to $\state_0$ in $\ss$. Such sequences always exist since $C^1([0,T],\cs)$ is dense in $L^2([0,T];\cs)$, and $D(\A)$ is densely embedded in $\ss$. Corresponding to each initial condition $\state_0^n$ and input $\inp_n(t)$ is a unique classical solution $\state_n(t)$, $t\in[0,T]$, ensured by Theorem \ref{thm:existence classical}. This sequence of solutions also satisfies the equation (\ref{eq:mild solution}) of mild solutions. The mild solution (\ref{eq:mild solution}) is continuous with respect to $\state_0\in \ss$ and $\inp \in L^2(0,T;\cs)$. See \cite[Thm.~6.1.2]{pazy} for Lipschitz continuity with respect to initial conditions, and \cite[Proposition~5.2]{edalatzadehSICON} for Lipschitz continuity with respect to inputs. It follows that
\begin{gather}\label{eq:converge}
\state_n\to \state \text{ in } C([0,T];\ss)\\ 
\text{ as } \state_0^n\to \state_0 \text{ in } \ss \text{ and } \inp_n\to \inp \text{ in } L^2(0,T;\cs).\notag
\end{gather}

The Lyapunov function is differentiable for every pair $(\state_n(t),\inp_n(t))$, and from Lemma \ref{lem:derivative}, its derivative satisfies 
\begin{equation}
\dot{V}(\state_n(t))\le \epsilon_3\norm{\inp_n(t)}{\cs}^2-\omega V(\state_n(t)),
\end{equation}
for all $t\in [0,T]$. Taking the integral yields
\begin{flalign}\label{eq:limit}
V(\state_n(t))\le & V(\state_0^n)+\epsilon_3\int_0^t\norm{\inp_n(s)}{\cs}^2\, ds\notag\\
&-\omega\int_0^t V(\state_n(s))\, ds.
\end{flalign}
From Lemma \ref{lem:bound}, the Lyapunov function satisfies
\begin{flalign}
|V(\state_2)-V(\state_1)|\le& c_u\left|\normm{\state_2}^2-\normm{\state_1}^2\right|\notag\\
& \quad+c_h\left|\normm{\state_2}^4-\normm{\state_1}^4\right|\notag,
\end{flalign}
for all $\state_1$ and $\state_2$ in $\ss$. After some manipulation, it follows that
\begin{flalign}
&|V(\state_2)-V(\state_1)|\le\\
& \quad \left(c_u+ c_h(\normm{\state_2}^2+\normm{\state_1}^2)\right)\left(\normm{\state_2}+\normm{\state_1}\right)\normm{\state_2-\state_1}.\notag
\end{flalign}
Thus, for every $s\in[0,T]$, the sequence $V(\state_n(s))$ converges to $V(\state(s))$. The convergence is also uniform; that is, define
\begin{equation}
r=\sup_n\norm{\state_n}{{C([0,T];\ss)}},
\end{equation}
find that
\begin{flalign}
&|V(\state_n(s))-V(\state(s))|\le \max_{s\in[0,T]} |V(\state_n(s))-V(\state(s))|\notag\\
&\le \left(2c_ur+ 4c_hr^3\right)\norm{\state_n-\state}{{C([0,T];\ss)}}.\notag
\end{flalign}
Thus, by the uniform convergence theorem, the integral in (\ref{eq:limit}) converges. These together with (\ref{eq:converge}) imply that
\begin{equation}\notag
V(\state(t))+\omega\int_0^t V(\state(s))\, ds\le V(\state_0)+\epsilon_3\int_0^t\norm{\inp (s)}{\cs}^2\, ds
\end{equation}
for all $t\in [0,T]$. Apply Lemma \ref{lem:bound} to this inequality and take the maximum of both side over $[0,t]$ to obtain
\begin{flalign}
c_l\norm{\state}{{C(0,t;\ss)}}^2+\omega c_l\norm{\state}{{L^2(0,t;\ss)}}^2&\le c_u\normm{\state_0}^2+c_h\normm{\state_0}^4\notag\\
&+\epsilon_3\norm{\inp}{{L^2(0,t;\cs)}}^2. 
\end{flalign}
This inequality shows that the mild solution can be extended to the interval $[0,\bar{T}]$ \cite[Thm.~6.1.4]{pazy}. Since $\bar{T}$ was  arbitrary, the mild solution exists globally.
\end{proof}
For inputs in $PC(\R^+;\cs)$, a similar density argument can be applied to prove the ISS of railway track IVP.
\begin{cor}
The railway track IVP is input-to-state stable (ISS) in the sense of Definition \ref{def:ISS stability}.
\end{cor}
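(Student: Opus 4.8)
The idea is that the exponentially weighted Lyapunov estimate already obtained for classical solutions carries over, via the density argument of Theorem~\ref{thm:mild}, to all mild solutions, and is then read off as the required $\KL$/$\K$ bound. Fix $\state_0\in\ss$ and $\inp\in PC(\R^+;\cs)$, and set $M\coloneqq\sup_{t\ge0}\norm{\inp(t)}{\cs}<\infty$. Since $PC(\R^+;\cs)\subset L^2_{loc}(0,\infty;\cs)$, Theorem~\ref{thm:mild} already guarantees that a unique mild solution $\state(t)$ exists for all $t\ge0$, so only inequality $(\ref{ISS inequality})$ remains to be established. Recall that, for $\state_0\in D(\A)$ and $\inp\in C^1$, the computation producing $(\ref{eq:lyapunov upper bound2})$ in the proof of Theorem~\ref{thm-classical} yields $V(\state(t))\le e^{-\omega t}V(\state_0)+\frac{\epsilon_3}{\omega}\max_{s\in[0,t]}\norm{\inp(s)}{\cs}^2\le e^{-\omega t}V(\state_0)+\frac{\epsilon_3}{\omega}M^2$.

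First I would upgrade this bound to arbitrary $\state_0\in\ss$ and $\inp\in PC(\R^+;\cs)$. Fix $\bar T>0$ and, exactly as in the proof of Theorem~\ref{thm:mild}, choose $\state_0^n\in D(\A)$ with $\state_0^n\to\state_0$ in $\ss$ and $\inp_n\in C^1([0,\bar T];\cs)$ with $\inp_n\to\inp$ in $L^2(0,\bar T;\cs)$; the $\inp_n$ can moreover be taken with $\norm{\inp_n(t)}{\cs}\le M$ for all $t$, since mollification (with a routine shift at the left endpoint) does not increase the essential supremum. For the corresponding classical solutions $\state_n$ the estimate above gives $V(\state_n(t))\le e^{-\omega t}V(\state_0^n)+\frac{\epsilon_3}{\omega}M^2$ on $[0,\bar T]$. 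Continuous dependence of the mild solution on the initial datum and the input (\cite[Thm.~6.1.2]{pazy} and \cite[Proposition~5.2]{edalatzadehSICON}) gives $\state_n\to\state$ in $C([0,\bar T];\ss)$, and the local Lipschitz continuity of $V$ on bounded subsets of $\ss$ shown in the proof of Theorem~\ref{thm:mild} then gives $V(\state_n(t))\to V(\state(t))$ and $V(\state_0^n)\to V(\state_0)$. Hence $V(\state(t))\le e^{-\omega t}V(\state_0)+\frac{\epsilon_3}{\omega}M^2$ on $[0,\bar T]$, and since $\bar T$ was arbitrary, for all $t\ge0$.

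Finally, applying Lemma~\ref{lem:bound} in the form $c_l\normm{\state(t)}^2\le V(\state(t))$ and $V(\state_0)\le c_u\normm{\state_0}^2+c_h\normm{\state_0}^4$, together with $\sqrt{a+b}\le\sqrt a+\sqrt b$, one gets
\[
\normm{\state(t)}\le e^{-\omega t/2}\sqrt{\frac{c_u\normm{\state_0}^2+c_h\normm{\state_0}^4}{c_l}}+\sqrt{\frac{\epsilon_3}{c_l\omega}}\;M .
\]
Set $\beta(r,t)\coloneqq e^{-\omega t/2}\sqrt{(c_u r^2+c_h r^4)/c_l}$ and $\gamma(r)\coloneqq\sqrt{\epsilon_3/(c_l\omega)}\,r$. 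For each fixed $t\ge0$, $\beta(\cdot,t)$ is continuous, strictly increasing, and vanishes at $r=0$, so $\beta(\cdot,t)\in\K$; for each fixed $r>0$, $\beta(r,\cdot)$ is continuous, strictly decreasing, and tends to $0$ as $t\to\infty$, so $\beta(r,\cdot)\in\EL$; hence $\beta\in\KL$, and clearly $\gamma\in\K$. Since $M=\sup_{t\ge0}\norm{\inp(t)}{\cs}$, the displayed estimate is exactly $(\ref{ISS inequality})$, so the railway track IVP is ISS in the sense of Definition~\ref{def:ISS stability}. The only step needing real care is keeping the $C^1$ approximants of $\inp$ uniformly bounded by $M$ on each interval, so that passing to the limit $n\to\infty$ does not enlarge the coefficient of the input term; mollification supplies such a sequence, and everything else is the machinery of Theorems~\ref{thm-classical} and \ref{thm:mild} and Lemmas~\ref{lem:bound} and \ref{lem:derivative} already established.
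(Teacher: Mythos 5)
Your proof is correct and follows essentially the same route as the paper: a density argument approximating $(\state_0,\inp)$ by $D(\A)$ initial data and $C^1$ inputs, the exponential Lyapunov estimate for classical solutions, continuous dependence to pass to the limit, and Lemma~\ref{lem:bound} to read off the ISS bound. Your only deviations are refinements rather than a different method: you approximate the input in $L^2$ while keeping $\norm{\inp_n(t)}{\cs}\le M$ via mollification (arguably more careful than the paper's claim of uniform $C^1$ approximation of a piecewise continuous input), and you write out the comparison functions $\beta\in\KL$, $\gamma\in\K$ explicitly.
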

\begin{proof}
For every $T>0$, $PC([0,T];\cs) \subset L^2_{loc}(0,\infty;\cs)$; thus, Theorem \ref{thm:mild} ensures that a unique mild solution $\state(t)$ exists for all inputs $\inp$ in $PC([0,T];\cs)$. Consider a sequence of initial conditions $\state_0^n\in D(\A)$ converging to $\state_0$ in $\ss$, and also a sequence of inputs $\inp_n\in C^1([0,T];\cs)$ converging uniformly to $\inp\in PC([0,T];\cs)$. Let $\state_n(t)$ be the classical solution to railway track IVP with initial condition $\state_0^n$ and input $\inp_n(t)$. This solution also satisfies (\ref{eq:mild solution}) which ensures that
\begin{gather}
\state_n\to \state \text{ in } C([0,T];\ss)\\ 
\text{ as } \state_0^n\to \state_0 \text{ in } \ss \text{ and } \inp_n(t)\to \inp (t) \text{ uniformly}.\notag
\end{gather}
See \cite[Thm.~6.1.2]{pazy} for Lipschitz continuity with respect to initial conditions, and \cite[Proposition~5.2]{edalatzadehSICON} for Lipschitz continuity with respect to inputs. Use Theorem \ref{thm:lyapunov} to obtain
\begin{flalign}\label{last}
\normm{\state_n(t)}^2\le e^{-\omega t}&\left(\frac{c_u}{c_l}\normm{\state^n_0}^2+\frac{c_h}{c_l}\normm{\state^n_0}^4\right)\notag\\
&+\frac{\epsilon_3}{\omega}\max_{t\in[0,T]}\norm{\inp_n (t)}{\cs}^2.
\end{flalign}
This inequality continuously depends on the norm of initial conditions and inputs. Taking the limit yields a similar inequality for $\state(t)$ with $\state_0$ and $\inp(t)$ replaced. Knowing that $\sup_{t \geq 0}\norm{\inp(t)}{\cs}< \infty$, the ISS property in Definition \ref{def:ISS stability} follows immediately.
\end{proof}
\section{Conclusions and Future Research}
The stability and well-posedness of a nonlinear railway track model was established in this paper. Using a suitable Lyapunov function, it was proved that the model admits a global (in time) classical solution for a continuously differentiable input. The solution is also  exponentially stable. For less regular inputs, belonging only to $L_{loc}^2 (0,\infty; \mathbb U)$ or $PC(\R^+;\cs)$,  existence and stability of a mild solution as well as input-to-state stability (ISS) of the model were established.
Current research is concerned with extending the results of this paper to more general nonlinear structural models.

\bibliographystyle{IEEEtran}
\bibliography{library}
\end{document}